\newtheorem{theorem}{Theorem}
\newtheorem{lemma}[theorem]{Lemma}
\theoremstyle{remark}
\numberwithin{theorem}{section} \numberwithin{equation}{section}
\begin{document}
\title{\textbf {Small prime gaps in abelian number fields}}
\author[Alexandra Musat]{ Alexandra Mihaela Musat}
\date{}
\maketitle
\begin{abstract}
In \cite{GPY}, Dan Goldston, Janos Pintz, and Cem Yildirim proved that there are infinitely many rational primes that are much closer together than the average gap between primes. %More precisely, they proved that 
%\begin{equation*}
%\displaystyle \liminf_{n\to\infty} \frac{p_{n+1}-p_n}{\log{p_n}}=0. 
%\label{initial}\end{equation*} 
Assuming the Elliott-Halberstam conjecture, they also proved the existence of infinitely many bounded differences between consecutive primes. 

We will prove an analogue of their result for a number field. We let $K$ be a finite extension of $\mathbb{Q}$ and consider $(q_n)$ the sequence of rational prime numbers that split completely in $K$. For $K$ an abelian extension of $\mathbb{Q}$, we prove that
\begin{equation*}
\displaystyle \liminf_{n \to \infty} \displaystyle \frac{q_{n+1}-q_n}{\log q_n}=0.
\label{teorema2}
\end{equation*}
Under the Elliott-Halberstam conjecture, we also prove that there are infinitely many bounded gaps between consecutive members of the sequence $q_n$.

We also give another proof of the same result in the special case of a quadratic extension of class number $1$, which relies on a generalization of the Bombieri-Vinogradov theorem for quadratic number fields. 

%Assuming an analogue result for number fields of the Elliott-Halberstam conjecture, we also prove that for a totally real field, there exist infinitely many prime elements in the logarithmic space that are apart by a bounded distance.
\end{abstract}
\section{Introduction}
One of the most famous unresolved problems in number theory is the twin prime conjecture, which states that there are infinitely many prime numbers $p$ for which $p+2$ is also prime. In 2005, putting to rest a long-standing open problem, Dan Goldston, Janos Pintz and Cem Yildirim proved that there are infinitely many primes that are very close together-much closer together than the average gap between primes. Under the Elliott-Halberstam conjecture, which concerns primes in arithmetic progressions, they also proved that there are infinitely many bounded gaps between consecutive prime numbers. More formally, they proved that
\begin{equation}
\liminf_{n \to \infty} (p_{n+1}-p_n) \le 16,
\label{ec1}
\end{equation} where $p_n$ denotes the $n^{\text{th}}$ prime number.

If we let 
\begin{equation*}
E= \displaystyle \liminf_{n\to\infty} \frac{p_{n+1}-p_n}{\log{p_n}},
\end{equation*}
then the main result proven by Goldston, Pintz and Yildirim in \cite{GPY} is that \begin{equation}
E=0,
\label{prim}
\end{equation} which is implied by the twin prime conjecture and by \eqref{ec1}.
 
 Finding upper bounds on $E$ has been a long studied problem. The famous prime number theorem states that the number $\pi(x)$ of prime numbers up to $x$ is roughly $\displaystyle \frac{x}{\ln x}.$ This is equivalent to the statement that $p_n \sim n \ln n$, which would imply that the average gap between consecutive prime numbers is $\ln n \sim \ln p_n$. This asymptotic expression implies that $E \le 1$. Erd\"{o}s \cite{erdos} was the first to prove that $E<1$. Another important result was that $E<\displaystyle \frac{1}{2}$, proven by Bombieri and Davenport in \cite{bombieri}. Maier \cite{maier} further improved the result to $\displaystyle E \le 0.24...$.
 
 In May 2005, Goldston, Pintz and Yildirim showed that $E=0$. Their proof uses the Bombieri-Vinogradov theorem on primes in arithmetic progressions and shows a connection between the distribution of primes in arithmetic progressions and small gaps between primes.  
 
 In the present paper, we will prove a generalization of this result for the case of an abelian number field. Following \cite{GPY}, we will prove both an unconditional result and a conditional result, depending on the Elliott-Halberstam conjecture. 
 We will prove the following unconditional result.
 
 \begin{theorem}[Unconditional result]
Let $K$ be an abelian extension of $\mathbb{Q}$ and let $(q_n)$ be the sequence of rational prime numbers that split completely in $K$. Then 

$$\displaystyle \liminf_{n \to \infty} \displaystyle \frac{q_{n+1}-q_n}{\log q_n}=0.$$

\label{goal}
\end{theorem}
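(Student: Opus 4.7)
\emph{Reduction via Kronecker--Weber.} The plan is to reduce the theorem to the existence of small gaps between primes in a fixed arithmetic progression and then to run a version of the GPY sieve of \cite{GPY} inside that progression. Since $K/\mathbb{Q}$ is abelian, Kronecker--Weber embeds $K$ in a cyclotomic field $\mathbb{Q}(\zeta_m)$, where $m$ may be taken to be the conductor of $K$. Under the identification $\mathrm{Gal}(\mathbb{Q}(\zeta_m)/\mathbb{Q})\cong(\mathbb{Z}/m\mathbb{Z})^{\times}$, an unramified rational prime $p$ splits completely in $K$ if and only if $p\bmod m$ lies in the subgroup $H=\mathrm{Gal}(\mathbb{Q}(\zeta_m)/K)$. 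Hence, up to finitely many primes, $(q_n)$ is exactly the sequence of rational primes falling in the union of arithmetic progressions $a\pmod{m}$ for $a\in H$. Fixing any $a_0\in H$, the primes $\equiv a_0\pmod{m}$ form a subsequence of $(q_n)$, so it is enough to produce small gaps in this thinner sequence: if $p'<p''$ are consecutive primes $\equiv a_0\pmod{m}$, any other $q_n$'s lying in $(p',p'')$ only subdivide the interval, so at least one consecutive $q$-gap inside $[p',p'']$ has length $\le p''-p'$.

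\emph{GPY inside the progression.} Writing $p'_n$ for the $n$th rational prime $\equiv a_0\pmod{m}$, the goal becomes $\liminf_n(p'_{n+1}-p'_n)/\log p'_n=0$. I would fix an admissible $k$-tuple $(h'_1,\dots,h'_k)$ of non-negative integers and set $h_i=mh'_i$; then $(h_1,\dots,h_k)$ is still admissible and each shift preserves residue classes modulo $m$. For $n\equiv a_0\pmod{m}$ with $N<n\le 2N$, introduce the GPY weights restricted to divisors coprime to $m$,
\[
\Lambda_R(n)=\frac{1}{(k+\ell)!}\sum_{\substack{d\mid\prod_i(n+h_i)\\ \gcd(d,m)=1,\ d\le R}}\mu(d)(\log(R/d))^{k+\ell},
\]
and form the positivity sum
\[
\mathcal{S}(N)=\sum_{\substack{N<n\le 2N\\ n\equiv a_0\pmod{m}}}\Lambda_R(n)^{2}\Bigl(\sum_{i=1}^{k}\theta(n+h_i)-\log 3N\Bigr).
\]
Whenever $\mathcal{S}(N)>0$, some $n$ in the progression must make at least two of the $n+h_i$ prime; since $m\mid h_i-h_j$, both primes lie in the class $a_0\pmod{m}$, producing a $p'$-gap of length at most $m(h'_k-h'_1)$.

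\emph{Main terms and expected obstacle.} Expanding $\Lambda_R(n)^{2}$ and proceeding as in \cite{GPY}, both moments reduce to counts of primes in arithmetic progressions modulo $m[d_1,d_2]$ with $[d_1,d_2]\le R^{2}$. Taking $R=N^{1/4-\varepsilon}$, these moduli stay below $mN^{1/2-2\varepsilon}$, and because $m$ is a fixed constant, the classical Bombieri--Vinogradov theorem supplies the level of distribution required. The resulting singular series and Euler products are the familiar GPY ones, adjusted only by trivial local factors at primes dividing $m$, so the ratio of the two main terms agrees, up to a positive constant depending only on $K$, with its unrestricted GPY analogue. For each $\eta>0$, choosing $k$ sufficiently large and an admissible tuple of diameter $\le(\eta/m)\log N$ then makes $\mathcal{S}(N)>0$ for large $N$, hence $p'$-gaps of size $\le\eta\log N$ occur infinitely often, giving $\liminf_n(p'_{n+1}-p'_n)/\log p'_n\le\eta$; letting $\eta\to 0$ concludes. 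The main obstacle I anticipate is essentially bookkeeping: verifying that the main-term asymptotics of \cite{GPY} carry through under the extra congruence $n\equiv a_0\pmod{m}$, and that the local factors at primes dividing $m$ preserve positivity of $\mathcal{S}(N)$. No analytic input beyond classical Bombieri--Vinogradov is needed.
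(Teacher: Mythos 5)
\emph{Reduction is fine; the positivity claim is not.} Your Kronecker--Weber reduction to a fixed arithmetic progression and the setup of GPY weights restricted to $\gcd(d,m)=1$ match the paper's framework, and the Bombieri--Vinogradov level-of-distribution bookkeeping for moduli $m[d_1,d_2]\le mR^2$ with fixed $m$ is correct. The gap is in the final step: you assert that for a fixed admissible tuple $H=\{h_1,\dots,h_k\}$ of small diameter and $R=N^{1/4-\varepsilon}$, one can make
\[
\mathcal{S}(N)=\sum_{\substack{N<n\le 2N\\ n\equiv a_0\pmod m}}\Lambda_R(n)^2\Bigl(\sum_{i=1}^k\theta(n+h_i)-\log 3N\Bigr)>0
\]
by taking $k$ large. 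This is false at level $\theta=1/2$. The GPY moment computations give
\[
\frac{\sum_n\Lambda_R(n)^2\sum_i\theta(n+h_i)}{\sum_n\Lambda_R(n)^2}\sim\beta\log R,\qquad \beta=\frac{2k(2l+1)}{(l+1)(k+2l+1)},
\]
and $\beta\to 4$ as $k,l\to\infty$ with $l=o(k)$, while $\log R=(\tfrac14-\varepsilon)\log N$; so the ratio tends to a limit strictly less than $\log 3N$, and $\mathcal{S}(N)$ is negative for every choice of $k$. This is precisely why the fixed-tuple version of the argument only produces the \emph{conditional} bounded-gap result (requiring $\theta>\tfrac12$), not the unconditional $\liminf=0$. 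Shrinking the tuple's diameter does not help: it controls the size of a gap should one be found, but does not change the first-moment ratio.

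\emph{What the paper does instead.} To get the unconditional result, the paper follows GPY's averaging device. It replaces the single weight $\alpha(n,H)$ by $g(n)=\sum_{H\subseteq A,\ |H|=k}\alpha(n,H)$ with $A=[1,m\delta\log x]\cap m\mathbb{Z}$, and, crucially, counts $\varpi(n+h)$ over \emph{all} shifts $h\equiv 0\pmod m$ with $h<m\delta\log x$, not only $h\in H$. The extra shifts $h_0\notin H$ contribute an additional main term of size comparable to $\delta\log x$ times the denominator, and after invoking the lemma from \cite{GPY2} on the near-monotonicity of $B_A(k)=\sum_{|H|=k,\ H\subseteq A}\mathfrak{S}(H)$, the final inequality becomes $\beta\log R+\delta\log x\ge(1+\epsilon)\log x$, which \emph{does} hold for $\beta$ near $4$, $R=x^{1/4}$, and any $\epsilon<\delta$. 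The averaging over tuples and the inclusion of $h_0\notin H$, together with the singular-series lemma, are the missing ingredients in your proposal; without them the argument cannot close.
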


 Assuming the Eliott-Halberstam conjecture, we will also prove the following theorem.
 
 \begin{theorem}[Conditional Result]
Let $K$ be an abelian extension of $\mathbb{Q}$ and let $(q_n)$ be the sequence of rational prime numbers that split completely in $K$. Then there exist infinitely many bounded gaps between consecutive members of the sequence $q_n$.
\label{goal2}
\end{theorem}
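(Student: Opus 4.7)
The strategy is to reduce the problem to counting small gaps between primes restricted to a fixed arithmetic progression, and then to apply the Goldston--Pintz--Yildirim machinery on that restricted set. Since $K$ is abelian, the Kronecker--Weber theorem places $K$ inside a cyclotomic field $\mathbb{Q}(\zeta_f)$, where $f$ denotes the conductor of $K$. The Galois group $\operatorname{Gal}(\mathbb{Q}(\zeta_f)/\mathbb{Q}) \cong (\mathbb{Z}/f\mathbb{Z})^\times$ has a subgroup $H$ fixing $K$, and a rational prime $p \nmid f$ splits completely in $K$ if and only if its Frobenius lies in $H$, that is, if and only if the class of $p$ modulo $f$ belongs to $H$. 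Consequently the sequence $(q_n)$ coincides, up to the finitely many primes dividing $f$, with the union of the arithmetic progressions $\{a + f\mathbb{Z} : a \in H\}$, and it suffices to exhibit bounded gaps between primes in a single such progression.

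Fix a residue class $a \in H$. The next step is to select an admissible $k$-tuple $\mathcal{H} = \{h_1,\ldots,h_k\}$ all of whose entries are divisible by $f$; this is achieved by replacing any admissible $k$-tuple $\mathcal{H}_0$ with $f\mathcal{H}_0$, which remains admissible because multiplication by $f$ is a bijection on residue classes modulo primes $p \nmid f$ and collapses to the single class $0$ modulo primes $p \mid f$. With this choice, whenever $n \equiv a \pmod{f}$ each shift $n + h_i$ also satisfies $n + h_i \equiv a \pmod{f}$, so any primes appearing in the shifted tuple automatically split completely in $K$. I would then form the GPY weighted sum
\begin{equation*}
S(N) = \sum_{\substack{N < n \le 2N \\ n \equiv a \,(\mathrm{mod}\, f)}} \left( \sum_{i=1}^{k} \chi_{P}(n+h_i) - \log 3N \right) \Lambda_R(n;\mathcal{H})^2,
\end{equation*}
where $\Lambda_R(n;\mathcal{H})$ is the truncated divisor sum of \cite{GPY} and $\chi_P$ denotes the indicator of the rational primes. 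Positivity of $S(N)$ for arbitrarily large $N$ produces infinitely many $n$ for which at least two of the $n+h_i$ are prime, and hence two members of $(q_n)$ at distance at most $\max_i h_i - \min_i h_i$.

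The bulk of the proof is then the asymptotic evaluation of $S(N)$, which I would carry out by adapting the computations of \cite{GPY}. The congruence $n \equiv a \pmod{f}$ multiplies the main term of each moment of $\Lambda_R(n;\mathcal{H})^2$ by a fixed local factor at $f$, leaves the singular series $\mathfrak{S}(\mathcal{H})$ essentially unchanged, and reduces the prime-counting moment $\sum_n \chi_P(n+h_i)\Lambda_R(n;\mathcal{H})^2$ to a sum over primes in arithmetic progressions of modulus $qf$ with $q \le R^2 \le N^{\vartheta-\epsilon}$. The Elliott--Halberstam conjecture supplies the required control for any $\vartheta < 1$, and since $f$ is fixed it does not interact with the level-of-distribution parameter. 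The delicate step, which I expect to be the principal obstacle, is tracking the error terms uniformly so that the $f$-dependence is absorbed once and for all; once this is verified, any $\vartheta > 1/2$ combined with $k$ sufficiently large yields $S(N) > 0$ by the same argument as in \cite{GPY}, and hence bounded gaps in the sequence $(q_n)$.
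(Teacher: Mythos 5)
Your strategy matches the paper's: the Kronecker--Weber theorem reduces complete splitting in $K$ to a congruence condition on $p$ modulo the conductor $f$, you choose the admissible tuple $\mathcal{H}$ with all entries divisible by $f$, restrict the GPY weighted sum to $n \equiv a \pmod{f}$, and invoke Elliott--Halberstam to push the level of distribution past $1/2$. The paper's Lemma 3.1 performs exactly the asymptotic evaluation you sketch, and the calculation in Section 4 confirms the cancellation $\mathfrak{S}(\rho_3)/\mathfrak{S}(\rho_1) = \phi(m)/m$ between the modified singular series and the competing normalizations $1/\phi(m)$ and $1/m$ --- this is precisely the delicate step you anticipate as the principal obstacle.

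One genuine slip in your formulation of $S(N)$: you pair the unweighted prime indicator $\chi_P$ with the threshold $\log 3N$. Since $\sum_{i=1}^k \chi_P(n+h_i) \le k$ while $\log 3N \to \infty$, the inner factor is eventually negative for every $n$, so $S(N) > 0$ can never be achieved and the argument produces nothing. You need the logarithmically weighted prime indicator $\varpi$ in place of $\chi_P$ (so each prime in the tuple contributes roughly $\log N$, and $S(N) > 0$ forces at least two of the $n+h_i$ to be prime), or else keep $\chi_P$ but subtract $1$ instead of $\log 3N$. The paper, following Goldston--Pintz--Yildirim, uses $\varpi$. This is a formulational error rather than a strategic gap, and the rest of the argument is sound once corrected.
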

 Our method of proof of the two theorems above will follow the main ideas used in \cite{GPY}. Using results from class field theory, we know that the primes that split completely in a number field are described, with finitely many exceptions, by congruence conditions. In our proof, we will consider the analogue multiplicative functions as in \cite{GPY} and we will impose some extra congruence conditions on them.
 
 The paper is organized as follows. In section 2, we give a quick overview of the method used by Goldston, Pintz and Yildirim in the original proof in \cite{GPY}. In section 3, we will find estimates for the main sums used in proving the existence of conditionally bounded gaps and the unconditional result. In sections 4 and 5 we prove the conditional result \eqref{goal2} and the unconditional result \eqref{goal}, respectively. Section 6 of the paper provides a different approach for proving \eqref{goal} and \eqref{goal2} in the special case of a quadratic extension of class number $1$, which could give some insight into how we could prove analogous results for more general number fields.
 
 \section{A brief review of the Goldston, Pintz and Yildirim method}
 The general approach employed by Goldston, Pintz and Yildirim in \cite{GPY} was the following. 

 Let $k$ be a given positive integer which is at least 2 and take $H=\{h_1, \ldots, h_k\}$ to be a set of nonnegative distinct integers. Define

$$ \varpi(n)=
\begin{cases}
\log{p} & \text{ if }  n=p  \text{ prime} \\
0 & \text{ otherwise.}
\end{cases} $$ Let $P(n,H)=(n+h_1) \cdots (n+h_k) $ and suppose that for each prime $p$ the number $\nu_p$ of residue classes occupied by the elements of $H$ is at most $p.$ The Hardy-Littlewood conjecture implies that there exist infinitely many $n$ such that the translate set $n+H$ consists entirely of primes. 

The idea of Goldston, Pintz and Yildirim was to find a nonnegative function $\alpha(n)$ such that 
\begin{equation}
\displaystyle\sum_{x \le n<2x} \alpha(n) \displaystyle \sum_{h \in H} \varpi(n+h)/\displaystyle \sum_{x \le n<2x} \alpha(n)
\label{cevrem}
\end{equation}
 is big for $ x\to \infty. $ This would be useful for the following reason. If we could choose a function $\alpha(n)$ such that for $x \gg 1$,
 \begin{equation*}
 \displaystyle\sum_{x \le n<2x} \alpha(n) \displaystyle \sum_{h \in H} \varpi(n+h)/\displaystyle \sum_{x \le n<2x} \alpha(n) \ge (1+\epsilon) \ln x,
 \end{equation*} then it would follow that there would be at least two primes in the set $n+H$, for some $n \in [x, 2x)$, the gap being bounded by $h_k-h_1$. Letting $x \to \infty$ would prove the existence of bounded differences between consecutive primes.  
 Motivated by the ideas behind the Selberg sieve, they put $ \alpha(n,H)= (\displaystyle\sum_{d|P(n,H)} \lambda_d)^2 $, with $\lambda_d$ given by the formula
$$ \lambda_d=
\begin{cases}
\frac{1}{(k+l)!} \mu(d)(\log{\frac{R}{d}})^{k+l} & \text{ if } d \le R \\
0 & \text{ otherwise.}
\end{cases} $$
The best choice for the parameter $l$ above is positive but $o(k)$. This choice will become clearer later in the paper. 

In order to estimate the ratio \eqref{cevrem}, they invoked the Bombieri-Vinogradov theorem, which concerns the distribution of primes in arithmetic progressions. To state the Bombieri-Vinogradov theorem, we must introduce some notation. Define the classical von Mangoldt function 
\begin{equation*}
\Lambda(n)=
\begin{cases}
\log{p} & n=p^a\\
0 & \text{otherwise}
\end{cases}
\end{equation*} and the normalized prime-counting function 
\begin{equation*}
\psi(x;q,a)=\displaystyle\sum_{\substack{n \le x \\ n \equiv a \pmod {q}}} \Lambda(n)
\end{equation*} for the arithmetic progression $a \bmod {q}.$ Define the error quantity 
\begin{equation*}
E(x,q)=\displaystyle\max_{(a,q)=1} \displaystyle\max_{y \le x} |\psi(y;q,a)-\frac{y}{\phi(q)}|. 
\end{equation*} Then the Bombieri-Vinogradov theorem asserts that the error term above is as small on average over $q$ as the Riemann hypothesis would predict.
\begin{theorem}[Bombieri-Vinogradov]
For a real number $A$, there exists a $B$ such that 
\begin{equation*}
\sum_{q<\frac{x^{1/2}}{(\log x)^B}} E(x,q)\ll \frac{x}{(\log x)^A}.
\end{equation*}
\end{theorem}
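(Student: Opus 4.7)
The plan is to follow the classical route of combining an identity for the von Mangoldt function, such as Vaughan's identity, with the multiplicative large sieve inequality for primitive Dirichlet characters.

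First, for each $(a,q)=1$ I would use orthogonality of Dirichlet characters modulo $q$ to write
\[
\psi(y;q,a) - \frac{y}{\phi(q)} = \frac{1}{\phi(q)} \sum_{\chi \neq \chi_0} \bar\chi(a)\, \psi(y,\chi) + O(\log^2 x),
\]
where $\psi(y,\chi) = \sum_{n \le y}\Lambda(n)\chi(n)$ and the error term comes from the discrepancy between $y$ and the truncated principal-character sum. Reducing every non-principal character to its primitive inductor (the error in this reduction sums harmlessly) and passing the maximum over $y$ through the character sum via the crude bound $\max_y |\sum_\chi \cdots| \le \sum_\chi \max_y |\cdots|$, the theorem reduces to proving
\[
\sum_{q \le Q} \frac{q}{\phi(q)} \sideset{}{^*}\sum_{\chi \bmod q} \max_{y \le x} |\psi(y,\chi)| \ll \frac{x}{(\log x)^A},
\]
where $Q = x^{1/2}/(\log x)^B$ and the star restricts to primitive characters.

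Next, I would split the range of $q$. For small moduli $q \le (\log x)^D$, the Siegel--Walfisz theorem gives $\psi(y,\chi) \ll y\exp(-c\sqrt{\log y})$ for every primitive non-principal character of such modulus, bounding that contribution by $x/(\log x)^A$ directly (with an ineffective constant). For the complementary range I would apply Vaughan's identity to decompose $\Lambda(n)\chi(n)$ into a bounded number of bilinear forms. The \emph{Type I} pieces (where one variable runs over a short range) are handled by partial summation together with the P\'olya--Vinogradov estimate for incomplete character sums, while the \emph{Type II} pieces $\sum_{m,n} a_m b_n \chi(mn)$ with $m,n$ in balanced dyadic intervals are attacked by Cauchy--Schwarz followed by the Montgomery--Vaughan large sieve inequality
\[
\sum_{q \le Q} \frac{q}{\phi(q)} \sideset{}{^*}\sum_{\chi \bmod q}\Bigl|\sum_{n \le N} a_n \chi(n)\Bigr|^2 \ll (Q^2 + N)\sum_{n \le N} |a_n|^2.
\]

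The main obstacle is the calibration of the cutoff parameter in Vaughan's identity: one must balance the Type I and Type II estimates so that, with $Q$ as large as $x^{1/2}/(\log x)^B$, each piece still contributes at most $x/(\log x)^A$. It is precisely the $Q^2$ term in the large sieve, matched against the trivial $N \approx x^{1/2}$ arising in the worst Type II range, that pins the theorem to the square-root barrier -- this is the very obstruction whose removal the Elliott--Halberstam conjecture postulates, and which motivates the conditional theorem of this paper.
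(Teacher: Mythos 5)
The paper does not actually prove the Bombieri--Vinogradov theorem; it is stated in Section 2 purely as a citation from the literature, to motivate the definition of the level of distribution $\theta$ and to set up the machinery of \cite{GPY}. So there is no in-paper proof to compare against. That said, your sketch is a faithful outline of the standard proof: orthogonality to pass from $\psi(y;q,a)$ to character sums, reduction to primitive characters, Siegel--Walfisz for small moduli, Vaughan's identity to produce Type~I and Type~II bilinear forms, and Cauchy--Schwarz plus the multiplicative large sieve to handle the Type~II range. Your closing remark correctly identifies the $Q^2$ term in the large sieve as the source of the $x^{1/2}$ barrier.

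One small imprecision worth flagging: in your first display the error term is not $O(\log^2 x)$. Writing $\psi(y;q,a) - y/\phi(q) = \frac{1}{\phi(q)}(\psi(y,\chi_0) - y) + \frac{1}{\phi(q)}\sum_{\chi\ne\chi_0}\bar\chi(a)\psi(y,\chi)$, the contribution $\psi(y,\chi_0)-y$ splits into the coprimality correction $\sum_{p\mid q}\sum_{p^k\le y}\log p = O(\log^2 x)$, which is what you had in mind, \emph{and} the prime number theorem error $\psi(y)-y$, which is of size $O(y\exp(-c\sqrt{\log y}))$ and is certainly not $O(\log^2 x)$. It must be kept and then shown to be negligible after dividing by $\phi(q)$ and summing over $q\le Q$, which it is, since $\sum_{q\le Q}\phi(q)^{-1}\ll\log Q$ and $x\exp(-c\sqrt{\log x})\log x\ll x/(\log x)^A$. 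With that repair the sketch is sound.
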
 
Let $\theta$ be the supremum of all $\theta'$ such that \begin{equation*} 
\sum_{q<x^{\theta'}} E(x,q)\ll_{\theta', A} \frac{x}{(\log x)^A}.
\end{equation*}
We call $\theta$ the \emph{level of distribution} of primes in arithmetical progressions. Note that the Bombieri-Vinogradov theorem implies that $\theta \ge \frac{1}{2}$. The Elliot-Halberstam conjecture claims that $\theta=1$. In \cite{GPY}, it is shown that the truth of the Elliott-Halberstam conjecture implies that $ \displaystyle\liminf_{n \to \infty} (p_{n+1}-p_n) \le 16$. 
%\begin{theorem}
%Let $K$ be an abelian extension of $\mathbb{Q}$ and let $(q_n)$ be the sequence of rational prime numbers that split completely in $K$. Then 
%\begin{equation}
%\displaystyle \liminf_{n \to \infty} \displaystyle \frac{q_{n+1}-q_n}{\log q_n}=0.
%\label{goal}
%\end{equation}
%\end{theorem}

\section{Some estimates needed for an abelian extension}
In this section, we will prove the main lemma we will use in the proof of \eqref{goal} and \eqref{goal2}.
\subsection{Main ideas used in the proof} 

It is known that the primes that split completely in an abelian extension are described, with finitely many exceptions, by congruence conditions. Therefore, we need to prove that the sequence of rational prime numbers $(q_n)$ with $q_n \equiv a \pmod m$, for fixed $a, m$ with $(a,m)=1$ satisfies \eqref{goal}. 

As in \cite{GPY}, we take $H$ to be a a set of $k$ positive integers $H=\{h_1, h_2, \ldots, h_k\}$ and we impose the extra condition that $H \bmod m =\{0\}$. Let 
\begin{equation}
m=p_1^{a_1} \cdots p_r^{a_r}
\label{factorizare}
\end{equation} be the prime factorization of $m$.
Choose 
\begin{equation}
 \alpha(n,H)= (\displaystyle\sum_{d|P(n,H)} \lambda_d)^2, 
 \label{alfa}
 \end{equation} with $\lambda_d$ given by the formula
\begin{equation}
 \lambda_d=
\begin{cases}
\frac{1}{(k+l)!} \mu(d)(\log{\frac{R}{d}})^{k+l} & \text{ if } d \le R \\
0 & \text{ otherwise,}
\end{cases} 
\label{lambda}
\end{equation} for some $R=R(x)$.

We will estimate $\displaystyle \sum_{\substack{x \le n<2x \\ n \equiv a\pmod m}} \alpha(n)$ and $\displaystyle \sum_{\substack{x \le n<2x \\ n\equiv a \pmod m}} \alpha(n) \sum_{h \in H} \varpi(n+h)$, and assuming the Elliott-Halberstam conjecture, we will prove that 
\begin{equation*}
\displaystyle \sum _{\substack{x\le n < 2x \\ n \equiv a \pmod m}} \alpha(n) \displaystyle \sum_{h \in H} \varpi(n+h) / \displaystyle \sum_{\substack{x \le n <2x\\ n \equiv a \pmod m}} \alpha(n) \ge (1+ \epsilon) \log x.
\end{equation*}

\subsection{Estimates}

For a multiplicative function $\rho$ on the squarefree integers, define 
$$ \mathfrak{S}(\rho)= \prod_{p \text{ prime }} \left(1-\frac{\rho(p)}{p}\right){\left(1-\frac{1}{p}\right)}^{-k}.$$ 

We impose the congruence condition $H \bmod m = \{0\}$ and we evaluate $\displaystyle\sum_{\substack{x \le n<2x \\ n\equiv a \pmod {m}}} \alpha(n)$ and $\displaystyle\sum_{\substack{x \le n<2x\\ n\equiv a \pmod {m}}} \alpha(n) \varpi(n+h_0)$, for $h_0$ not belonging to $H$ and $h_0 \equiv 0 \pmod m$, and for $h_0 \in H$.

The estimates are given by the following lemma.
\begin{lemma} \label{lema1}
Let $H$ be a set of $k$ nonnegative integers such that $H \bmod m=\{0\}$ and $$\max _{h \in H} h \ll \log R.$$ 

Let $\epsilon>0$ and $\alpha(n)$ and $\lambda_d$ be as given by formula \eqref{lambda}. Then for $R, x \to \infty$ such that $\log x \ll \log R$, we have:
\begin{enumerate}
\item  Provided $R \le x^{1/2-\epsilon},$ 
\begin{equation}
\displaystyle\sum_{\substack{x \le n<2x \\ n\equiv a \pmod {m}}} \alpha(n)= \frac{x}{m} \frac{(\log R)^{k+2l}}{(k+2l)!} \binom{2l}{l} (\mathfrak{S}(\rho_1)+o(1)), 
\label{estimate 1}
\end{equation} where the multiplicative function $\rho_1(p)$ is defined by: 
\begin{equation*}
\rho_1(p)= 
\begin{cases} 
0 & \text{ if } p=p_i, \text{ for } i \in \{1, \ldots,r\}  \\
\nu_p & \text{ if } p \text{ prime} \ne p_i, \text{ for any } i \in \{1, \ldots,r\} 
\end{cases}
\end{equation*} and extended multiplicatively to squarefree integers. Recall that $p_i$ are the primes that appear in the factorization of $m$, as defined in \eqref{factorizare}. \\
\item For an integer $h_0 \notin H$ such that $h_0 \equiv 0 \pmod m$, provided $ R \le x^{\theta/2-\epsilon}:$
\begin{equation}
\displaystyle \sum_{\substack{x \le n<2x\\ n\equiv a \pmod {m}}} \alpha(n) \varpi(n+h_0)= \frac{x}{\phi(m)} \frac{(\log R)^{k+2l}}{(k+2l)!} \binom{2l}{l} (\mathfrak{S}(\rho_2)+o(1)), 
\label{estimate 2}
\end{equation} where the multiplicative function $\rho_2(p)$ is defined by  
\begin{equation*} \rho_2(p)=
\begin{cases} 
0 & \text{ if } p=p_i, \text{ for } i \in \{1, \ldots,r\}\\
\displaystyle \frac{p ( \nu_p (H \cup \{h_0\})-1)}{\phi(p)} & \text{ if } p \text{ prime } \ne p_i, \text{ for any } i \in \{1, \ldots,r\}.
\end{cases}
\end{equation*}  Recall that $ \theta $ is the level of distribution of primes in arithmetic progressions, as defined in the previous section.\\
\item For an integer $h_0 \in H$, provided $ R \le x^{\theta/2-\epsilon},$
\begin{equation}
\displaystyle\sum_{\substack{x \le n<2x\\ n\equiv a \pmod {m}}} \alpha(n) \varpi(n+h_0)= \frac{x}{\phi(m)} \frac{(\log R)^{(k+2l+1)}}{(k+2l+1)!} \binom{2(l+1)}{l+1} (\mathfrak{S}(\rho_3)+o(1)), 
\label{estimate 3}
\end{equation} where the multiplicative function $\rho_3(p)$ is defined by  
\begin{equation*}
 \rho_3(p)=
\begin{cases} 
0 & \text{ if } p=p_i, \text{ for }i \in \{1, \ldots,r\} \\
 \displaystyle \frac{p (\nu_p(H)-1)}{\phi(p)} & \text{ if } p \text{ prime } \ne p_i, \text{ for any } i \in \{1, \ldots,r\}.
\end{cases}
\end{equation*}
\end{enumerate} 
\end{lemma}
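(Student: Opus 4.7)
The approach is to adapt the three GPY estimates to the arithmetic progression $n \equiv a \pmod m$, using the hypothesis $H \equiv 0 \pmod m$ and $(a,m)=1$ to decouple the local factors at primes dividing $m$ from the rest. The entire archimedean (contour) side of the argument will then be identical to that of \cite{GPY}; only the singular series and the restriction on $[d_1,d_2]$ are new.

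Expanding $\alpha(n) = \bigl(\sum_{d\mid P(n,H)} \lambda_d\bigr)^2$ and swapping summations, each of the three sums becomes
\begin{equation*}
\sum_{d_1,d_2 \le R} \lambda_{d_1}\lambda_{d_2}\, T_\bullet(d_1,d_2),
\end{equation*}
where $T_\bullet$ counts $n \in [x,2x)$ with $n \equiv a \pmod m$ and $D:=[d_1,d_2] \mid P(n,H)$, weighted in parts (2), (3) by $\varpi(n+h_0)$. Because $\lambda_d$ is supported on squarefree $d$, $D$ is squarefree. If a prime $p$ divides $(D,m)$, then $p\mid P(n,H)$ forces $n \equiv -h_i \pmod p$ for some $i$; but $h_i \equiv 0 \pmod p$ by $H \subseteq m\mathbb{Z}$, so $p\mid n$, contradicting $(a,m)=1$. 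The sum thus collapses to the range $(D,m)=1$, which is exactly what is encoded in $\rho_\bullet(p_i)=0$.

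For $(D,m)=1$, CRT combines the congruence modulo $m$ with the admissible residue classes modulo $D$ independently. In case (1) the count is $T_1 = \tfrac{x}{mD}\prod_{p\mid D}\nu_p + O\bigl(\prod_{p\mid D}\nu_p\bigr)$, whose local factor $\nu_p$ at $p\nmid m$ is $\rho_1(p)$. In cases (2), (3) we need primes $n+h_0$ in an AP of modulus $Dm$. For each admissible class $n \equiv -h_i \pmod p$ at a prime $p \mid D$, we have $n+h_0 \equiv h_0-h_i \pmod p$; for $n+h_0$ to be a large prime we must discard the class with $h_i \equiv h_0 \pmod p$, leaving $\nu_p(H\cup\{h_0\})-1$ admissible classes in case (2) and $\nu_p(H)-1$ in case (3). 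Applying Bombieri–Vinogradov (or EH, in the conditional setting) to the resulting sums of $\psi(\,\cdot\,;Dm,\cdot)$ with $Dm \le mR^2 \le x^{\theta-\epsilon'}$, and using $\phi(Dm)=\phi(D)\phi(m)$, yields main terms with local factors $(\nu_p(H\cup\{h_0\})-1)/\phi(p) = \rho_2(p)/p$ and $(\nu_p(H)-1)/\phi(p) = \rho_3(p)/p$, respectively.

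It remains to evaluate sums of the shape $\sum_{d_1,d_2} \lambda_{d_1}\lambda_{d_2}\rho_\bullet(D)/D$, which is carried out by the contour-integration method of \cite{GPY}. The generating Dirichlet series factors as an Euler product that produces $\mathfrak{S}(\rho_\bullet)$ together with a zeta-theoretic correction; the pole at $s_1=s_2=0$ is of order $k$ in cases (1), (2) (since $\rho_1(p),\rho_2(p)\to k$ for large $p$) and of order $k-1$ in case (3) (since $\rho_3(p)\to k-1$). The first two cases give the factor $(\log R)^{k+2l}\binom{2l}{l}/(k+2l)!$, while the one-order-lower pole in case (3) promotes the exponent to $k+2l+1$ and shifts the binomial to $\binom{2(l+1)}{l+1}$, exactly as claimed. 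The main obstacle I anticipate is the routine but delicate bookkeeping that, after multiplication by $|\lambda_{d_1}\lambda_{d_2}| \ll (\log R)^{2(k+l)}$, the accumulated Bombieri–Vinogradov error stays $o$ of the main term; this forces the $A$ in the statement of Bombieri–Vinogradov to be chosen sufficiently large in terms of $k$ and $l$, exactly as in \cite{GPY}.
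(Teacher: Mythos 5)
Your proposal is correct and closely tracks the paper's argument in its main structure: expand $\alpha(n)$, interchange summation, use $H \equiv 0 \pmod m$ and $(a,m)=1$ to force $([d_1,d_2],m)=1$, count admissible residue classes modulo $[d_1,d_2]m$ via CRT, invoke Bombieri--Vinogradov (or EH) to control the error, and reduce the resulting divisor sum to the GPY singular-series estimate. The local-factor bookkeeping for $\rho_1,\rho_2,\rho_3$ at $p\nmid m$ and at $p\mid m$ is right, and the remark that $A$ in Bombieri--Vinogradov must be large in terms of $k,l$ matches the paper's treatment of the error term (which uses Cauchy--Schwarz, Abel summation on $\sum (9k^2)^{\omega(\delta)}/\delta$, and the trivial bound $E(N,q)\ll (N/q)\log N$).

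The one place you take a genuinely different route is case (3), $h_0\in H$. The paper reduces it to case (2) by observing that when $\varpi(n+h_0)\ne 0$ and $d\le R< n+h_0$, one has $\alpha(n,H)=\alpha(n,H\setminus\{h_0\})$ provided $(k,l)$ is simultaneously shifted to $(k-1,l+1)$; then the case-(2) formula applied to $(H\setminus\{h_0\},\,k-1,\,l+1)$ delivers the exponent $k+2l+1$ and the binomial $\binom{2(l+1)}{l+1}$ directly, and the \emph{only} form of the GPY contour estimate ever needed is the degree-$k$ one, which is what the paper cites. You instead count $\nu_p(H)-1$ classes directly and appeal to a degree-$(k-1)$ version of the contour estimate (pole of one lower order with Selberg parameter $k+l$ still fixed). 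This is correct and arguably more transparent, but it does require a slightly more general version of the GPY lemma than the one the paper invokes; you should note this explicitly, or adopt the paper's substitution trick, which sidesteps the issue. One further caution, shared with the paper: when $\rho_3(p)=k-1+O(1/p)$, the product $\prod_p\bigl(1-\rho_3(p)/p\bigr)\bigl(1-1/p\bigr)^{-k}$ as literally written does not converge; the singular series $\mathfrak{S}(\rho_3)$ must be understood with the exponent $-(k-1)$ (matching the reduced set $H\setminus\{h_0\}$ of size $k-1$), which is precisely what the paper's substitution forces and what Section~4 of the paper uses when computing $\mathfrak{S}(\rho_3)/\mathfrak{S}(\rho_1)$.
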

\begin{proof}
We will first prove that the main term of the sum \eqref{estimate 1} is of the form 
\begin{equation*}
\displaystyle \frac{x}{m} \sum_{d_1,d_2 \le R} \lambda_{d_1} \lambda_{d_2} \frac{\rho_1([d_1,d_2])}{[d_1,d_2]}, 
\end{equation*} and that the main terms of the sums \eqref{estimate 2} and \eqref{estimate 3} are of the form $$\displaystyle \frac{x}{\phi(m)} \sum_{d_1,d_2 \le R} \lambda_{d_1} \lambda_{d_2} \frac{\rho([d_1,d_2])}{[d_1,d_2]},$$ where $\rho=\rho_2$ for the second sum and $\rho=\rho_3$ for the third sum. 
We first evaluate the following.
 \begin{align*}
 \displaystyle \sum_{\substack{x \le n<2x \\ n \equiv a\pmod m}} \alpha(n) & =\displaystyle \sum_{\substack{x \le n<2x \\ n \equiv a\pmod m}} {\left(\sum_{d|P(n,H))} \lambda_d \right)}^2
 \\
 & = \sum_{d_1,d_2 \le R} \lambda_{d_1} \lambda_{d_2} \displaystyle \sum_{\substack{x \le n<2x \\ n \equiv a \pmod m \\ \delta=[d_1,d_2]|P(n,H)}} 1. \end{align*}

If $(m, \delta) \ne 1$, then there is some prime divisor $p_i $ of $m$  $( i \in \{1, \ldots, r \})$ that also divides $\delta.$ If $ \delta|P(n,H)$, then there would exist some element $h \in H$ for which $n+h \equiv 0\pmod {p_i}$. Since $H$ was chosen so that $H \bmod m=\{0\}$, it follows that $n\equiv 0\pmod {p_i}$, which contradicts the condition $n \equiv a\pmod m, (a,m)=1. $ Hence we have the following.
$$\sum_{d_1,d_2 \le R} \lambda_{d_1} \lambda_{d_2} \displaystyle \sum_{\substack{x \le n<2x \\ n \equiv a \pmod m \\ \delta=[d_1,d_2]|P(n,H)}} 1= \sum_{d_1,d_2 \le R} \lambda_{d_1} \lambda_{d_2} \displaystyle \sum_{\substack{x \le n<2x \\ n \equiv a \pmod m \\ \delta|P(n,H)\\ (m,\delta)=1}} 1 .$$
Since $\delta$ is sqarefree, the condition $\delta|P(n,H)$ is equivalent to that for each prime number $p|\delta, $ there is an element $h \in H$ such that $n+h \equiv 0 \pmod p$. Since the elements of $H$ occupy $\nu_p$ congruence classes modulo $p$ , it follows that the condition $\delta|P(n,H)$ is equivalent to saying that $n$ belongs to one of $\nu_p$ congruence classes modulo $p$. Therefore, the conditions $n \equiv a \pmod m, \delta|P(n,H),$ $(m,\delta)=1$ are equivalent to that $n$ belongs to one of $\rho_1(\delta)$ congruence classes modulo $[m,\delta]=m\delta, $ where $\rho_1$ is defined by:
\begin{equation*}
\rho_1(p)= 
\begin{cases} 
0 & \text{ if } p|m \\
\nu_p & \text{ if } p \nmid m. 
\end{cases}
\end{equation*}
Then $$\displaystyle \sum_{\substack{x \le n<2x \\ n \equiv a \pmod m \\ \delta|P(n,H)\\ (m,\delta)=1}} 1= \displaystyle x \frac{\rho_1(\delta)}{m \delta}+ r_{\delta},$$ with $|r_{\delta}| \le \rho_1(\delta).$ Hence we get 
\begin{align*}
\displaystyle \sum_{\substack{x \le n<2x \\ n \equiv a\pmod m}} \alpha(n) & = \sum_{d_1,d_2 \le R} \lambda_{d_1} \lambda_{d_2} \displaystyle \sum_{\substack{x \le n<2x \\ n \equiv a \pmod m \\ \delta|P(n,H)\\ (m,\delta)=1}} 1 
\\
& = \frac{x}{m} \displaystyle \sum_{d_1,d_2 \le R} \lambda_{d_1} \lambda_{d_2} \frac{\rho_1(\delta)}{\delta}+\displaystyle \sum_{d_1,d_2 \le R} \lambda_{d_1} \lambda_{d_2} r_{\delta}. 
\end{align*}
We will show that the error term 
\begin{equation}
\displaystyle \sum_{d_1,d_2 \le R} \lambda_{d_1} \lambda_{d_2} r_{\delta}
\label{eroare}
\end{equation}
is $o(x)$ for fixed $k$ and $l,$ provided that $R \le x^{1/2-\epsilon}$. 

We have the inequality 
\begin{equation*}
|r_{\delta}| \le \rho_1(\delta)= \prod_{p|\delta} \rho_1(p) \le \prod_{p|\delta} k= k^{\omega(d)},
\end{equation*} where $\omega(d)$ is the number of distinct prime divisors of $d.$
Using the definition of $\lambda_d$, we get that $\lambda_d \ll (\log R)^{k+l}$. Combining these two relations gives that the error term \eqref{eroare} is 
$$ \ll (\log R)^{2(k+l)} \sum_{\delta \le R^2} r_{\delta} \displaystyle \sum_{\substack{d_1,d_2 \le R \\ [d_1,d_2]=\delta}} 1.$$
Since $d_1, d_2$ are squarefree, we have that $\displaystyle \sum_{\substack{d_1,d_2 \le R \\ [d_1,d_2]=\delta}} 1=3^{\omega(d)},$ and hence the error term \eqref{eroare} is
 $ \ll (\log R)^{2(k+l)} \sum_{\delta \le R^2} (3k)^{\omega(d)}.$ Using the estimate $$\sum_{n<x} k^{\omega(n)} \ll x{(\log x)}^{k-1}, $$ we get that the error term is 
$ \ll R^{2+\epsilon'},$ for any $ \epsilon'>0.$ Provided $R \le x^{1/2-\epsilon},$ it follows that the error term is indeed $o(x).$\\

We now turn to evaluating the second sum. Let $h_0 \notin H, h_0 \equiv 0 \pmod m.$ Then
\begin{align*} 
 \sum_{\substack{x \le n<2x\\ n\equiv a \pmod {m}}} \alpha(n) \varpi(n+h_0) 
&= 
\displaystyle \sum_{\substack{x \le n<2x \\ n \equiv a \pmod m}} \varpi(n+h_0) \left(\sum_{d|P(n,H)} \lambda_d\right)^2 \\
&= \displaystyle \sum_{\substack{x \le n<2x \\ n \equiv a \pmod m}} \varpi(n+h_0) \sum_{\delta|P(n,H)} \lambda_{d_1} \lambda_{d_2}
\\
&= 
\sum_{d_1,d_2 \le R} \lambda_{d_1} \lambda_{d_2} \displaystyle \sum_{\substack{x \le n<2x\\ n \equiv a \pmod m \\ \delta|P(n,H)}} \varpi(n+h_0) \\
&= \sum_{d_1,d_2 \le R} \lambda_{d_1} \lambda_{d_2} \displaystyle \sum_{\substack{ 0 \le t< \delta\\ \delta|P(t,H)\\(\delta,m)=1}} \vartheta^{*}(x;m\delta,at+h_0) \\
%\end{align*}
%\begin{align*}
&= \sum_{d_1,d_2 \le R} \lambda_{d_1} \lambda_{d_2} \displaystyle \sum_{\substack{0 \le t< \delta\\ \delta|P(t,H)\\(at+h_0,m\delta)=1}} \frac{x}{\phi(m\delta)} \\
&+ \sum_{d_1,d_2 \le R} \lambda_{d_1} \lambda_{d_2}\left( \displaystyle \sum_{\substack{ 0 \le t< \delta\\ \delta|P(t,H)\\(\delta,m)=1}} \vartheta^{*}(x;m\delta,at+h_0)-\frac{x}{\phi(m\delta)}\right).
\end{align*} where 
$$ \vartheta(x;q,a)= \displaystyle \sum_{\substack{ n \le x \\ n \equiv a \pmod m}} \varpi(n), $$ and
$$ \vartheta^{*} (x;q,a)= \vartheta(2x;q,a)-\vartheta(x;q,a). $$
 %\end{equation*}$
As in the previous case, the conditions on $n$ imply that $(m, \delta) \ne 1.$ Therefore 
\begin{equation}
\displaystyle \sum_{\substack{x \le n<2x\\ n\equiv a \pmod {m}}} \alpha(n) \varpi(n+h_0)= \sum_{d_1,d_2 \le R} \lambda_{d_1} \lambda_{d_2} \displaystyle \sum_{\substack{x \le n<2x\\ n \equiv a \pmod m \\ \delta|P(n,H)\\(m,\delta)=1}} \varpi(n+h_0). 
\label{ec10}
\end{equation}
The conditions on $n$ in the equation \eqref{ec10} above are equivalent to that for any prime $p|\delta,$ there is some $h$ in $H \cup \{h_0\}$ such that $n = -h \bmod p,$ and $n \ne -h_0 \bmod p$. Hence $n$ belongs to one of $\nu_p (H \cup \{h_0\})-1$ congruence classes modulo $p.$ If we define the multiplicative function $\rho_2$ on the squarefree integers as
\begin{equation*}
 \rho_2(p)=
\begin{cases} 
0 & \text{ if } p|m\\
 \displaystyle \frac{p ( \nu_p (H \cup \{h_0\})-1)}{\phi(p)} & \text{ if } p \nmid m, 
\end{cases} 
\end{equation*}
and since for $(m,\delta)=1$ we have that $\phi(m\delta)=\phi(m)\phi(\delta)$, the main term of the second sum \eqref{estimate 2}  is $$\displaystyle \frac{x}{\phi(m)} \sum_{d_1,d_2 \le R} \lambda_{d_1} \lambda_{d_2} \frac{\rho_2([d_1,d_2])}{[d_1,d_2]},$$ and we will prove that the error term is $o(x).$

Recall our assumption that $\displaystyle \sum_{q<x^{\theta}} E(x,q)\ll_{\theta, A} \frac{x}{(\log x)^A}, $ for any $\epsilon>0$.

Since $\lambda_d \ll (\log R)^{k+l}$, the error term becomes 
$$ \ll (\log R)^{2(k+l)} \displaystyle \sum_{d_1,d_2 \le R} \displaystyle \sum_{\substack{ 0 \le t< \delta\\ \delta|P(t,H)\\(\delta,m)=1}} E(x;m\delta,at+h_0).$$
In the sum over $t, $ there are fewer than $k^{\omega(\delta)}$ terms and using a similar argument as in the evaluation of the error term for the first sum, we get that the error is
$$ \ll (\log R)^{2(k+l)} \sum_{\delta \le R^2} (3k)^{\omega(\delta)} \max_{(b,m\delta)=1} E(x; m\delta, b).$$
The Cauchy-Schwarz inequality shows that the above is 
$$ \ll (\log R)^{2(k+l)}{\left( \sum_{\delta \le R^2} \frac{(9k^2)^{\omega(\delta)}}{m\delta}\right)}^{1/2} {\left ( \sum_{\delta \le R^2} m\delta  \max_{(b,m\delta)=1} E(x; m\delta, b)^2 \right)}^{1/2}. $$
To estimate $\displaystyle {\left( \sum_{\delta \le R^2} \frac{(9k^2)^{\omega(\delta)}}{m\delta}\right)}^{1/2},$ we will use partial summation in the form of Abel's identity, which states the following.
\begin{lemma}[Abel's identity]
For any arithmetical function $a(n)$ let
\begin{equation*}
A(x)= \sum_{n\le x} a(n),
\end{equation*} where $A(x)=0$ if $x \le 1$. Assume $f$ has a continous derivative on the interval $[y,x]$, where $0<y<x$. Then we have
\begin{equation*}
\sum_{y < n \le x} a(n) f(n) = A(x)f(x)-A(y)f(y)- \int_{y}^{x} A(t) f'(t)\, dt
\end{equation*}
\end{lemma}
\begin{proof}
See \cite{apostol}.
\end{proof}
Now we apply the lemma above for $f(x)=\displaystyle \frac{1}{x}$, $a(n)= \displaystyle (9k^2)^{\omega(n)}$ and $y=1$, and we get that
\begin{equation}
\sum_{n\le x} \frac{(9k^2)^{\omega(n)}}{n}= \frac{ \displaystyle \sum_{n \le x} {(9k^2)^{\omega(n)}}} {x} + \int_{1}^{x} \frac{ \displaystyle \sum_{n \le t} (9k^2)^{\omega(n)}} {t^2} \, dt.
\label{abel2}
\end{equation}
Using the well-known asymptotics $\displaystyle \sum_{n\le x} k^{\omega(n)} \ll x \mathcal{L},$  where $\mathcal L$ is defined as $\mathcal L= O((\log N)^{O(1)}), $ and \eqref{abel2}, we get that 
$
\sum_{n\le x} \frac{(9k^2)^{\omega(n)}}{n} \ll \mathcal{L},$ hence 
\begin{equation}
{\left( \sum_{\delta \le R^2} \frac{(9k^2)^{\omega(\delta)}}{m\delta}\right)}^{1/2} \ll \mathcal{L}.
\label{schwarz}
\end{equation}
Now we will use the asymptotics 
\begin{equation}
 \sum_{q< R^2} q E(N,q)^2 \ll N \mathcal L \sum_{q<R^2} E(N,q),
 \label{asymptotics}
 \end{equation} which follows from the fact that 
 \begin{equation*}
 E(N,q) \ll \frac{N}{q} \log N ,
 \label{ec12}
 \end{equation*} for $q \ll N, $ which in turn is obtained after combining the following two estimates below:
 \begin{equation*}
 \frac{N}{\phi(q)} \ll \frac{N}{q} \log N
 \label{ec13}
 \end{equation*} 
 \begin{equation*}
 \psi(N;q,a)= \displaystyle \sum_{ p \equiv a \pmod q} \log p \le (\log N) (\frac{N}{q} + O(1)) \ll \frac{N}{q} \log N.
 \label{ec14}
 \end{equation*}
 Using the above asymptotics \eqref{asymptotics}, we get that
$$ \sum_{\delta \le R^2} m\delta  \max_{(b,m\delta)=1} E(x; m\delta, b)^2 \ll X \mathcal L \sum_{m\delta \le mR^2} E(x,m\delta).$$
For $R^2 \ll_{m} x^{\theta-\epsilon},$ we get that $\displaystyle \sum_{m\delta \le mR^2} E(x,m\delta) \ll \frac{x}{(\log x)^A}, $ so the error term will be
$$ \ll \mathcal L \left(x \mathcal L \frac{x}{(\log x)^{A}} \right)^{1/2} \ll \frac{x}{(\log x)^{A'}}.$$
Therefore the error term is $o(x)$ provided that $R \le x^{\theta/2-\epsilon}.$ \\

If $h_0 \in H, $ then we can replace $H$ by $H \setminus \{h_0\}$ and $(k,l)$ by $(k-1,l+1)$, since $\alpha(n,H)$ is thus preserved, and then we can apply the same arguments as for the case $h_o \in H$.

We have proved that the main terms are of the desired form. Let $$T=\sum_{d_1,d_2 \le R} \lambda_{d_1} \lambda_{d_2} \frac{\rho_([d_1,d_2])}{[d_1,d_2]},$$ for $\rho$ a multiplicative function on the squarefree integers. Using the estimate proven in \cite{GPY}, we get that 
$$T \sim \mathfrak{S}(\rho) \frac{(\log R)^{k+2l}}{(k+2l)!} \binom{2l}{l}.$$
Combining this result with the previous asymptotics gives the desired estimates. This finishes the proof of Lemma \eqref{lema1} which will be used in the next sections for proving \eqref{goal} and \eqref{goal2}. 

%$$\displaystyle \sum_{\substack{x \le n<2x\\ n\equiv a \pmod {m}}} \alpha(n) \varpi(n+h_0)= \sum_{d_1,d_2 \le R} \lambda_{d_1} \lambda_{d_2} \displaystyle \sum_{\substack{0 \le t< \delta\\ \delta|P(t,H)\\(at+h_0,m\delta)=1}} \frac{X}{\phi(m\delta)} $$ $$+ \sum_{d_1,d_2 \le R} \lambda_{d_1} \lambda_{d_2}\left( \displaystyle \sum_{\substack{ 0 \le t< \delta\\ \delta|P(t,H)\\(\delta,m)=1}} \nu^{*}(X;m\delta,at+h_0)-\frac{X}{\phi(m\delta)}\right).$$
\end{proof}

\section{Conditionally bounded gaps}

The object of this section is to prove \eqref{goal2}, which is conditional on the Elliott-Halberstam conjecture.
We evaluate the ratio $$\displaystyle \sum_{\substack{x \le n<2x \\ n\equiv a \pmod m}} \alpha(n) \sum_{h \in H} \varpi(n+h) / \displaystyle \sum_{\substack{x \le n<2x \\ n\equiv a \pmod m}} \alpha(n), $$ for the choice of $\alpha$ made.

Using the lemma proved in the previous section, we get that for any $\delta>0$, the inequality 
$$\displaystyle \sum_{\substack{x \le n<2x \\ n\equiv a \pmod m}} \alpha(n) \sum_{h \in H} \varpi(n+h) / \displaystyle \sum_{\substack{x \le n<2x \\ n\equiv a \pmod m}} \alpha(n) \ge \left(\frac{m}{\phi(m)} \frac {\mathfrak{S}(\rho_3)}{\mathfrak{S}(\rho_1)} \beta -\delta \right) \log R $$ holds for sufficiently large $R,$ where 
$$ \beta= \frac{2k(2l+1)}{(l+1)(k+2l+1)}.$$
Now by definition  $$ \frac {\mathfrak{S}(\rho_3)}{\mathfrak{S}(\rho_1)}= \displaystyle \frac {\displaystyle \prod_{p \text{ prime }} \left(1-\frac{\rho_3(p)}{p}\right){\left(1-\frac{1}{p}\right)}^{-k+1}} {\displaystyle \prod_{p \text{ prime }} \left(1-\frac{\rho_1(p)}{p}\right){\left(1-\frac{1}{p}\right)}^{-k}}.$$
For any prime $p \nmid m$ we have  $$ 
\displaystyle \frac{\displaystyle \left(1-\frac{\rho_3(p)}{p}\right){\left(1-\frac{1}{p}\right)}^{-k+1}} {\displaystyle \left(1-\frac{\rho_1(p)}{p}\right){\left(1-\frac{1}{p}\right)}^{-k}}= \displaystyle \frac{ \left(1- \displaystyle \frac{\nu_p(H)-1}{p-1} \right)\displaystyle \left(1- \frac{1}{p} \right)} { \displaystyle 1- \frac{\nu_p(H)}{p} }=1.$$
On the other hand, for $p|m$ we have $\nu_p(H)=1$, so that 
$$\displaystyle \frac{\displaystyle \left(1-\frac{\rho_3(p_i)}{p_i}\right){\left(1-\frac{1}{p_i}\right)}^{-k+1}} {\displaystyle \left(1-\frac{\rho_1(p_i)}{p_i}\right){\left(1-\frac{1}{p_i}\right)}^{-k}}= \displaystyle 1-\displaystyle \frac{1}{p_i}.$$
Hence
 $$ \frac {\mathfrak{S}(\rho_3)}{\mathfrak{S}(\rho_1)}= \displaystyle \frac {\displaystyle \prod_{p \text{ prime }} \left(1-\frac{\rho_3(p)}{p}\right){\left(1-\frac{1}{p}\right)}^{-k+1}} {\displaystyle \prod_{p \text{ prime }} \left(1-\frac{\rho_1(p)}{p}\right){\left(1-\frac{1}{p}\right)}^{-k}}=\displaystyle \prod_{i=1}^ r \left(1-\frac{1}{p_i} \right)= \displaystyle \frac{\phi(m)}{m}. $$
 Thus, we get that
 $$\displaystyle \sum_{\substack{x \le n<2x \\ n\equiv a \pmod m}} \alpha(n) \sum_{h \in H} \varpi(n+h) / \displaystyle \sum_{\substack{x \le n<2x \\ n\equiv a \pmod m}} \alpha(n) \ge (\beta-\delta) \log R. $$  
 If we assume that $\theta>\frac{1}{2}$ and if we let $k,l \to \infty$ with $l=o(k)$, then $\beta \to 4.$ Thus for a sufficiently small $\delta,$ there exists some small $\epsilon>0$ such that $(\beta-\delta) \log R \ge (1+\epsilon)\log x, $ for $R,x$ large enough.

Combining the above two inequalities gives 
$$\displaystyle \sum_{\substack{x \le n<2x \\ n\equiv a \pmod m}} \alpha(n) \sum_{h \in H} \varpi(n+h) / \displaystyle \sum_{\substack{x \le n<2x \\ n\equiv a \pmod m}} \alpha(n) \ge( 1+\epsilon)\log x, $$
so any tuple with $k$ elements each congruent to $0 \bmod m$ would have infinitely many translates containing two primes congruent to $a \bmod m. $ Therefore,  
\begin{equation*}
\displaystyle\liminf_{n \to \infty}(q_{n+1}-q_n)< \infty.
\end{equation*} 
%\begin{center}
\section{The unconditional Result}
%\end{center}
To obtain the unconditional result 
\begin{equation}
 \displaystyle \liminf_{n \to \infty} \frac{q_{n+1}-q_n}{\log q_n}=0, 
 \label{ec15}
\end{equation} we fix $\delta>0, $ and we define the function
$ g(n)= \displaystyle \sum_{\substack{ H \subseteq A \\ |H|=k}} \alpha(n,H), $ where $$A=[1,m\delta \log x] \cap m \mathbb{Z}.$$
We will prove that 
\begin{equation}
 \displaystyle \sum_{\substack{x \le n<2x \\ n\equiv 0 \pmod m}} g(n) \sum_{\substack{h\equiv a \pmod m \\ h<m\delta\log x}} \varpi(n+h)/ \displaystyle \sum_{\substack{x \le n<2x \\ n\equiv a \pmod m}} g(n)\ge (1+\epsilon) \log x  .
 \label{ec16}
 \end{equation}
 
Using the estimates proven before, we get that 
$$ \displaystyle \sum_{\substack{x \le n<2x \\ n\equiv 0 \pmod m}} g(n) \sim \frac{x}{m} \frac{(\log R)^{k+2l}}{(k+2l)!} \binom{2l}{l} \sum_{\substack{|H|=k\\ H\subseteq A}} \mathfrak{S}(\rho_1).$$
For $h \in H,$
 \begin{align*}
  \sum g(n) \sum_{h \in H} \varpi(n+h) &= \displaystyle \sum_{\substack{|H|=k\\ H\subseteq A}} \sum_{\substack{x \le n<2x \\ n\equiv a \pmod m}} \sum_{h \in H} \varpi(n+h)
 \\
 &\sim k \frac{x}{\phi(m)} \frac{(\log R)^{k+2l+1}}{(k+2l+1)!} \binom{2(l+1)}{l+1} \sum_{\substack{|H|=k\\ H\subseteq A}} \mathfrak{S}(\rho_3). 
 \end{align*}
 For $h \notin H,$
 \begin{align*}
   \sum g(n) \sum_{h \notin H} \varpi(n+h)&= \sum_{\substack{|H|=k+1 \\ H \subseteq A}} g(n) \sum_{h \in H} \varpi(n+h) 
 \\
  & \sim \frac{x}{\phi(m)} \frac{(\log R)^{k+2l}}{(k+2l)!} \binom{2l}{l} \sum_{\substack{|H|=k\\ H\subseteq A}} \mathfrak{S}(\rho_2). 
  \end{align*}
 Hence, \eqref{ec16} is equivalent to proving that
 \begin{equation}
  \beta \frac{x}{\phi(m)} \log R \sum_{\substack{|H|=k\\ H\subseteq A}} \mathfrak{S}(\rho_3) + \frac{1}{\phi(m)} \sum_{\substack{|H|=k\\ H\subseteq A}} \mathfrak{S}(\rho_2)   \ge (1+\epsilon) \log x \displaystyle \frac{1}{m} \sum_{\substack{|H|=k\\ H\subseteq A}} \mathfrak{S}(\rho_1).
  \label{ec17}
  \end{equation}
  Notice that $\mathfrak{S}(\rho_2)=\mathfrak{S}(H \cup \{h_0\}), $ and $\mathfrak{S}(\rho_3) / \mathfrak{S}(\rho_1)=\displaystyle \frac{\phi(m)}{m}.$
  
  We also have 
   $$\mathfrak{S}(\rho_1)=\mathfrak{S}(H)/ \displaystyle \prod_{i=1}^ r \left(1-\frac{1}{p_i} \right)= \mathfrak{S}(H) \displaystyle \frac{m}{\phi(m)}.$$ 
   Hence, $\mathfrak{S}(\rho_3)= \mathfrak{S}(H).$
  Then the desired inequality \eqref{ec17} is equivalent to
  \begin{equation*}
 \frac{\beta}{\phi(m)} \log R \sum_{\substack{|H|=k \\ H \subseteq A}} \mathfrak{S}(H) + \frac{1}{\phi(m)}\sum_{\substack{|H|=k+1 \\ H \subseteq A}}\mathfrak{S}(H)   \ge (1+\epsilon) \log X \frac{1}{\phi(m)} \sum_{\substack{|H|=k \\ H \subseteq A}} \mathfrak{S}(H),
 \end{equation*}
which is equivalent to 
  
  \begin{equation}
  \beta\log R \displaystyle \sum_{\substack{|H|=k \\ H \subseteq A}} \mathfrak{S}(H) + \displaystyle \sum_{\substack{|H|=k+1 \\ H \subseteq A}}\mathfrak{S}(H) \ge (1+\epsilon) \log x \displaystyle \sum_{\substack{|H|=k \\ H \subseteq A}} \mathfrak{S}(H).
 \label{ec18}
  \end{equation}

In order to establish \eqref{ec18}, we will now use the following result proven in \cite{GPY2}, section 14.
\begin{lemma}
Let 
$$ B_{A} (k)= \displaystyle \sum_{\substack{|H|=k \\ H \subseteq A}} \mathfrak{S}(H),$$
where all sets $H \subseteq A \subseteq [1,N]$ are counted with $k!$ multiplicity and $|A|=h.$  Let 
$$S_A^{*}(k)=\frac{B_A(k)}{h^k}.$$ If $k<\epsilon(h)h/\log_2 N, $ then
$$ S_A^{*}(k+1) \ge S_A^{*}(k)(1+O(\epsilon(h))+O(\frac{1}{\log N})).$$
\end{lemma}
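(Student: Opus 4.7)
The plan is to relate $B_A(k+1)$ to $h \cdot B_A(k)$ by inserting an extra coordinate and exploiting the multiplicativity of $\mathfrak{S}$. With the $k!$ convention, write $B_A(k) = \sum^{*}_{(a_1,\ldots,a_k) \in A^k} \mathfrak{S}(H)$, where $H = \{a_1,\ldots,a_k\}$ and $*$ indicates pairwise distinctness. Then
\begin{equation*}
B_A(k+1) = \sum^{*}_{(a_1,\ldots,a_k) \in A^k} \mathfrak{S}(H) \sum_{\substack{y \in A \\ y \ne a_i}} \prod_{p} f_p(y; H),
\end{equation*}
where
\begin{equation*}
f_p(y;H) = \frac{1 - \nu_p(H \cup \{y\})/p}{(1 - \nu_p(H)/p)(1 - 1/p)}.
\end{equation*}
The key arithmetic input is Gallagher's identity: for each prime $p$, one has $p^{-1}\sum_{r=0}^{p-1} f_p(r;H) = 1$ exactly, as a direct computation splitting by whether $r$ lies in one of the $\nu_p(H)$ occupied classes mod $p$ shows.

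I would then carry out the estimate of the inner sum $\sum_{y \in A} \prod_p f_p(y;H)$ in three steps. First, truncate at $z = \log^C N$ for $C$ large: for $p > z$ the factor $f_p$ differs from $1$ only when $p$ divides $y - a_i$ for some $i$, yielding a negligible relative error after summation over $y$. Second, since $A$ is essentially an arithmetic progression in $[1,N]$, the distribution of $y \in A$ in residue classes mod $p$ is uniform up to $O(1)$ for $p \le z$, so that $\sum_{y \in A} f_p(y;H) = h + O(\max_r |f_p(r;H)-1|)$ by Gallagher. Third, expand the product over $p \le z$, interchange with the sum over $y$, and apply Gallagher's identity termwise; the accumulated relative error is controlled by $\sum_{p \le z} 1/p \ll \log_2 N$, weighted by contributions depending on $\nu_p(H)/p$.

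The main obstacle is uniformity in $k$: individual factors $1 - \nu_p(H)/p$ can be as small as $1 - k/p$ when $p$ is of order $k$, and only the exact cancellation in Gallagher's identity prevents the product from degenerating. The hypothesis $k < \epsilon(h) h / \log_2 N$ is calibrated so that the total relative error from primes $p \le z$ is $O(\epsilon(h)) + O(1/\log N)$. This yields
\begin{equation*}
B_A(k+1) \ge h \cdot B_A(k)\bigl(1 + O(\epsilon(h)) + O(1/\log N)\bigr),
\end{equation*}
the forbidden values $y = a_i$ costing only $O(k)$ negligible terms per tuple, and dividing by $h^{k+1}$ gives the claim.
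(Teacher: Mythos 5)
The paper does not prove this lemma at all: it is invoked verbatim as ``the following result proven in \cite{GPY2}, section~14,'' so there is no internal proof to compare against. Your blind attempt therefore has to stand on its own, and while it correctly identifies the two essential ingredients --- factoring $\mathfrak{S}(H\cup\{y\})/\mathfrak{S}(H)$ into the local ratios
\begin{equation*}
f_p(y;H)=\frac{1-\nu_p(H\cup\{y\})/p}{(1-\nu_p(H)/p)(1-1/p)}
\end{equation*}
and the exact Gallagher-type cancellation $p^{-1}\sum_{r \bmod p} f_p(r;H)=1$ --- the error analysis is where the sketch does not close. When you truncate to $p\le z$, expand $\prod_{p\le z}\bigl(1+(f_p(y;H)-1)\bigr)$ as a sum over squarefree $d\mid\prod_{p\le z}p$, and interchange with the sum over $y\in A$, Gallagher's identity kills the main term for each $d>1$ by the Chinese Remainder Theorem, and what is left is the imperfect equidistribution of $A$ mod $d$, giving an absolute error of roughly $\sum_{r\bmod d}\prod_{p\mid d}|f_p(r;H)-1|=\prod_{p\mid d} O(\nu_p(H)/p)$. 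Summing over all such $d$ yields $\prod_{p\le z}\bigl(1+O(\nu_p(H)/p)\bigr)-1$, and this product need not be small: for primes $p\ll k$ the quantity $\nu_p(H)/p$ can be of order $1$, and since the lemma must hold uniformly as $k\to\infty$ (subject only to $k<\epsilon(h)h/\log_2 N$), the naive multiplicative accumulation blows up. Your statement that the accumulated relative error is ``controlled by $\sum_{p\le z}1/p\ll\log_2 N$'' only accounts for the first-order (single-prime) terms and, even then, conflates $\sum_{p\le z}1/p\asymp\log\log z$ with $\log_2 N$. You correctly flag uniformity in $k$ as ``the main obstacle'' and observe that the exact cancellation is what keeps things from degenerating, but the sketch does not explain how that cancellation survives the truncation and expansion steps --- which is exactly the delicate part and exactly what the cited GPY argument supplies. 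In short: the right tools, but the crucial estimate is asserted rather than proved.
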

We can apply the above lemma for $A=[1,m\delta \log x] \cap m \mathbb{Z}$, for which $|A|=\delta\log x+ O(1).$ By the lemma, we have that $B_A(k)/|A|^k$ is, apart from a factor of $1+o(1)$, non-decreasing as a function of $k.$ We divide \eqref{ec18} by $\sum_{\substack{|H|=k \\ H \subseteq A}} \mathfrak{S}(H).$ Thus, \eqref{ec18} is equivalent to proving that
\begin{equation*}
\beta \log R + \delta \log X \ge (1+\epsilon) \log x.
\end{equation*}
Taking $\beta \to 4$ and $R\to x^{\theta/2}=x^{1/4},$ notice that the above is true for any $\epsilon<\delta.$
Hence we get that 
$$\displaystyle \liminf_{n \to \infty} \frac{q_{n+1}-q_n}{\log q_n} \le \delta,$$ and since $\delta$ can be chosen to be arbitrarly small, we get the desired result.

\section{An alternate approach for quadratic fields}
 We will now give a second proof of \eqref{goal2} in the special case when $K$ is a quadratic extension of $\mathbb{Q}$ of class number $1$. The proof relies on a generalization of the Bombieri-Vinogradov theorem for the case of a quadratic number field, proven by E. Fogels in \cite{fogels}. 
 
 Although the result we prove in this section is a special case of \eqref{goal2}, the method we use could potentially be adapted to prove results about small prime gaps in more general number fields.
 
 We first give a brief overview of Fogels' result. In \cite{fogels}, Fogels gives a generalization of the Bombieri-Vinogradov theorem in the special case of a quadratic number field, in which he considers rational primes that split completely in the number field. 

Let $K$ be a fixed quadratic field. $\mathfrak{R}$ stands for classes of ideals. For any natural number $q$, we consider the group formed by the reduced classes of residues $ l \bmod q$ formed by the residues of the idealnorms $N(\mathfrak{a})$ with $(\mathfrak{a},[q])=1$ and $\mathfrak{a}$ belonging to the principal ideal class $\mathfrak{R}_1$. Denote by $\phi_1(q)$ the order of this group and by $h$ the number of classes of the ideal class group . Then Fogels' result is the following.
\begin{theorem}[Fogels]
Let $a=a(q, \mathfrak{R})$ be a normresidue$\pmod q$ with $(a,q)=1$ for the class $\mathfrak{R}$ of ideals in the quadratic number field $K$ and let $\pi(x; \mathfrak{R},q,a)$ denote the number of primes $p \le x$ such that $p \equiv a \pmod q$ and such that $p=N(\mathfrak{p})$ with $\mathfrak{p} \in \mathfrak{R}$. Then for any constant $A>0$ there is a corresponding constant $B>0$ such that
\begin{equation*}
\displaystyle \sum_{\mathfrak{R}} \displaystyle \sum_{q \le z^{1/2}/ (\log z)^{-B}} \max_{a(q, \mathfrak{R})} \max_{x \le z} |\pi(x; \mathfrak{R},q,a)-\frac{1}{h \phi_1(q)} \text{Li}(x)| \ll \frac{z}{(\log z)^A},
\end{equation*} with the constant in the notation depending merely on $A$ and the discriminant of the field.
\end{theorem}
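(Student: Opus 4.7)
The plan is to imitate the classical Bombieri--Vinogradov proof, with Dirichlet characters replaced by Hecke characters of $K$. Because $K$ is a fixed quadratic field, the class group has bounded order $h$ and every ray class group attached to a rational modulus $q$ decomposes cleanly as a product of the class group with the group of norm residues modulo $q$. First I would pass from $\pi(x;\mathfrak R,q,a)$ to its logarithmically weighted analogue $\psi(x;\mathfrak R,q,a)$ by partial summation, and detect the two side conditions by orthogonality: the class condition $\mathfrak p\in\mathfrak R$ is picked out by $\frac{1}{h}\sum_{\chi}\overline{\chi(\mathfrak R)}\chi(\mathfrak p)$ summed over class group characters $\chi$, while $N\mathfrak p\equiv a\pmod q$ is picked out by Dirichlet characters $\psi$ modulo $q$ restricted to the subgroup of norm residues. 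The product $\chi\psi$ is a Hecke character of modulus dividing $[q]$, and the principal character yields the expected main term $\mathrm{Li}(x)/(h\phi_1(q))$.

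The problem thus reduces to bounding the contribution of non-principal Hecke characters, averaged over $\mathfrak R$ and over $q\le Q=z^{1/2}/(\log z)^{B}$. For this I would prove a large sieve for primitive Hecke characters of $K$ whose conductors divide rational integers:
\[
\sum_{q\le Q}\;\sum_{\eta\bmod [q]}^{\ast}\Bigl|\sum_{N\mathfrak a\le N}a_{\mathfrak a}\eta(\mathfrak a)\Bigr|^{2}\ll (Q^{2}+N)\sum_{N\mathfrak a\le N}|a_{\mathfrak a}|^{2},
\]
obtained by a Selberg-type duality argument on $\mathcal O_K/[q]\mathcal O_K$ using that $[\mathcal O_K:[q]\mathcal O_K]=q^{2}$ together with the bounded ramification of $K/\mathbb Q$. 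With the large sieve in hand, I would apply a Heath--Brown or Vaughan identity to the number-field von Mangoldt function $\Lambda_K$, split the resulting Dirichlet polynomial for $L(s,\chi\psi)$ into Type~I and Type~II bilinear forms, and feed each into the large sieve via Cauchy--Schwarz, exactly as in Bombieri and Vinogradov's original argument.

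The residual range $q\le(\log z)^{B}$ is not reached by the large sieve and must be handled separately by a Siegel--Walfisz theorem for Hecke $L$-functions of $K$; this follows from Landau's prime ideal theorem, the classical zero-free region for Hecke $L$-functions, and Siegel's ineffective lower bound on a possible real zero. Summing the resulting pointwise estimates over the $O((\log z)^{B})$ small moduli and the finitely many classes $\mathfrak R$ costs only a power of $\log z$, which is absorbed by the gain $(\log z)^{-A'}$ coming from Siegel--Walfisz.

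The main obstacle is the large sieve itself, because we sum over rational moduli $q$ but the characters live on the ideal lattice of $K$: the Plancherel--duality argument has to be executed on the finite group $(\mathcal O_K/[q]\mathcal O_K)^{\times}$ while the final bound must be stated in the rational parameters $Q$ and $N$ rather than in the natural ideal-theoretic parameters $Q^{2}$ and $N$. Carrying out this translation correctly, while preserving the right weighting by $q/\phi_1(q)$ in the average, is what ultimately pins down the critical exponent in $Q\le z^{1/2}/(\log z)^{B}$. A secondary, unavoidable issue is the ineffectivity introduced by Siegel's theorem, which forces the implied constant in the final bound to depend non-effectively on $A$ and on the discriminant of $K$.
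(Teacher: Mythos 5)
The paper does not prove this statement; it cites Fogels' paper and uses the theorem as a black box, so there is no in-paper proof to compare against. Your architecture --- orthogonality to isolate the class condition and the norm-residue condition, a large sieve over the relevant characters, a combinatorial identity applied to $\Lambda_K$, and Siegel--Walfisz for small moduli --- has the right shape for a number-field Bombieri--Vinogradov theorem, and is in fact a more modern route than Fogels' original (1962) argument, which predates Vaughan's identity and proceeds instead through zero-density estimates for Hecke $L$-functions.

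The step that fails as written is the large sieve. Your display sums over \emph{all} primitive Hecke characters $\eta$ of modulus $[q]$ and asserts the bound $(Q^{2}+N)\sum|a_{\mathfrak a}|^{2}$. But for $K$ quadratic one has $N[q]=q^{2}$, so $\{[q]:q\le Q\}$ sits inside $\{\mathfrak q:N\mathfrak q\le Q^{2}\}$; the Hecke large sieve (Schaal--Huxley) then yields $(Q^{4}+N)\sum|a_{\mathfrak a}|^{2}$, not $(Q^{2}+N)$ --- and your own heuristic, duality on the group $\mathcal{O}_K/[q]\mathcal{O}_K$ of order $q^{2}$, points to that same $Q^{4}$. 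That exponent only reaches $q\le z^{1/4-\varepsilon}$, short of the theorem. The fix, which you flag as ``the main obstacle'' but do not carry out, is to exploit the fact that your orthogonality never produces an arbitrary Hecke character mod $[q]$: only products $\chi\cdot(\psi\circ N)$ occur, with $\chi$ ranging over the $h$ class group characters and $\psi$ over Dirichlet characters modulo $q$. Fixing $\chi$ and setting $b_{n}=\sum_{N\mathfrak a=n}a_{\mathfrak a}\chi(\mathfrak a)$, one should apply the \emph{classical} Dirichlet large sieve to $\sum_{n\le N}b_{n}\psi(n)$; since a quadratic field has $O(n^{\varepsilon})$ ideals of each norm $n$, this yields $(Q^{2}+N)N^{\varepsilon}\sum|a_{\mathfrak a}|^{2}$ and recovers the range $q\le z^{1/2}/(\log z)^{B}$. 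Until that reduction to the rational large sieve is made explicit, the proposal does not justify the claimed level of distribution.
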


To prove \eqref{goal} for the case of a quadratic number field, we proceed as follows.

\begin{proof}
Let $K$ be a fixed quadratic field of discriminant $D$ with class number $h=1$, and we take $H=\{h_1, \ldots, h_k\}$ to be a set of nonnegative distinct integers such that $H \bmod D =\{0\}$. We define the characteristic function $f$ on the integers by
\begin{equation*}
f(n)=
\begin{cases} 
1 & \text{ if } n \text{ is prime and splits completely in } K \\
0 & \text{ otherwise. }
\end{cases}
\end{equation*} 
Let $\alpha(n)$ be as defined in \eqref{alfa}. We will estimate $\displaystyle \sum_{x \le n < 2x} \alpha(n)$. We will also estimate  $\displaystyle \sum_{x\le n < 2x} \alpha(n) f(n+h_0)$, for $h_0$ an integer not belonging to $H$ such that $h_0 \equiv 0 \pmod D$  and for $h_0$ belonging to $H$, respectively.

Using the estimate in the original paper we get that 
\begin{equation}
\displaystyle \sum_{x \le n<2x} \alpha(n)= \ x \frac{(\log R)^{k+2l}}{(k+2l)!} \binom{2l}{l} (\mathfrak{S}(H)+o(1)).
\label{orig}
\end{equation}

Now let $h_0 \notin H$ be an integer such that $h_0 \equiv 0 \pmod D$. We will denote by $\delta=[d_1,d_2]$. We define $\pi^{*}(x;q,a)$ by
$ \pi^{*}(x;q,a)=\pi(2x;q,a)-\pi(x;q,a),$
 where $\pi(x;q,a)$ denotes the number of primes $p \le x$ such that $p \equiv a \pmod q$ and such that $p=N(\mathfrak{p})$ for $\mathfrak{p}$ an ideal in the number field (which is a principal ideal, since $K$ has class number one). Then we have 
\begin{align*} 
 \sum_{x \le n<2x} \alpha(n) f(n+h_0)&= \displaystyle \sum_{x \le n<2x } f(n+h_0) \left(\sum_{d|P(n,H)} \lambda_d\right)^2 
\\
&= \displaystyle \sum_{x \le n <2x} f(n+h_0) \sum_{\delta|P(n,H)} \lambda_{d_1} \lambda_{d_2}
\\
&= \sum_{d_1,d_2 \le R} \lambda_{d_1} \lambda_{d_2} \displaystyle \sum_{\substack{x \le n<2x \\ \delta|P(n,H)}} f(n+h_0)
\\
&= \sum_{d_1,d_2 \le R} \lambda_{d_1} \lambda_{d_2} \displaystyle \sum_{\substack{ m= n \bmod \delta\\ \delta|P(m,H)\\(\delta,m+h_0)=1}} \pi^{*}(X;\delta,m+h_0)
\\
&= \sum_{d_1,d_2 \le R} \lambda_{d_1} \lambda_{d_2} \displaystyle \sum_{\substack{m=n \bmod \delta\\ \delta|P(m,H)}} \frac{\text{Li}(x)}{\phi_1(\delta)} 
\\
&+ \sum_{d_1,d_2 \le R} \lambda_{d_1} \lambda_{d_2}\left( \displaystyle \sum_{\substack{m=n \bmod \delta\\ \delta|P(m,H)}} \pi^{*}(x;\delta,m+h_0)-\frac{\text{Li}(x)}{\phi_1(\delta)}\right).
\end{align*} 

Now we will evaluate the main term 
\begin{equation}
\sum_{\substack{d_1,d_2 \le R}} \lambda_{d_1} \lambda_{d_2} \displaystyle \sum_{\substack{m=n \bmod \delta\\ \delta|P(m,H)}} \frac{\text{Li}(x)}{\phi_1(\delta)}
\label{mainquadratic}
\end{equation} and the error term  
\begin{equation}
\sum_{d_1,d_2 \le R} \lambda_{d_1} \lambda_{d_2}\left( \displaystyle \sum_{\substack{m=n \bmod \delta\\ \delta|P(m,H)}} \pi^{*}(x;\delta,m+h_0)-\frac{\text{Li}(x)}{\phi_1(\delta)}\right).
\label{eroare2}
\end{equation} 
The conditions on $n$ in the main term \eqref{mainquadratic} above are equivalent to that $n$ belongs to one of $\nu_p(H \cup \{h_0\})-1$ congruence classes modulo $p$, by a similar argument as when evaluating \eqref{ec10}. We define the multiplicative function $\rho'$ by
\begin{equation*}
 \rho'(p)=
 \displaystyle \frac{p ( \nu_p (H \cup \{h_0\})-1)}{\phi_1(p)},  \text{ for } p \text{ prime }
\end{equation*} and extend it multiplicatively to squarefree integers. 
Since for $(m,\delta)=1$ we have that $\phi_1(m\delta)=\phi_1(m)\phi_1(\delta)$, the main term \eqref{mainquadratic} is \begin{equation}
\displaystyle \text{Li}(x) \sum_{d_1,d_2 \le R} \lambda_{d_1} \lambda_{d_2} \frac{\rho'([d_1,d_2])}{[d_1,d_2]} \sim \text{Li}(x) \frac{(\log R)^{k+2l}}{(k+2l)!} \binom{2l}{l} \mathfrak{S}(\rho'),
\label{mainterm3}
\end{equation} where the last asymptotics follows using the result proven in the original paper \cite{GPY}.  We have the following identity
\begin{equation*}
\mathfrak{S}(\rho')= \prod_{p \text{ prime }} \left(1-\frac{\rho'(p)}{p}\right){\left(1-\frac{1}{p}\right)}^{-k}
\end{equation*}
\begin{equation}
=\prod_{p \text{ prime }} \left(1-\frac{\nu_p(H \cup \{h_0\})}{\phi_1(p)}\right){\left(1-\frac{1}{p}\right)}^{-k}.
\label{gotic}
\end{equation}

When $p|D$ we get that $ \nu_p(H \cup \{h_0\})-1=0$, since $h_0 \equiv 0 \pmod D$, and $H$ was chosen such that $H \bmod D = \{0\}$.

When $p\nmid D$ it follows by a well-known result that $\phi_1(p)=p-1$. Hence, \eqref{gotic} becomes
\begin{equation*}
\mathfrak{S}(\rho')= \prod_{p|D} \left(1-\frac{1}{p}\right)^{-k} \prod_{p\nmid D} \left(1-\frac{\nu_p(H \cup \{h_0\})}{p-1}\right) \left(1-\frac{1}{p}\right)^{-k}.
\label{chestiein}
\end{equation*}
When $p|D$, 
\begin{equation*}
\left(1-\frac{1}{p}\right)^{-k}=\left(1-\frac{\nu_p(H \cup \{h_0\})}{p}\right){\left(1-\frac{1}{p}\right)}^{-(k+1)}.
\label{chestie2}
\end{equation*} 
When $p\nmid D$ we have that
\begin{equation*}
\left(1-\frac{\nu_p(H \cup \{h_0\})-1}{p-1}\right) \left(1-\frac{1}{p}\right)^{-k} = \displaystyle \frac{p-\nu_p(H \cup \{h_0\})}{p-1} \cdot \frac{p-1}{p}\left(1-\frac{1}{p}\right)^{-k-1}
\end{equation*}
\begin{equation*}
= \left(1-\frac{\nu_p(H \cup \{h_0\})}{p}\right){\left(1-\frac{1}{p}\right)}^{-(k+1)}.
\label{chestie1}
\end{equation*}
Combining the above two identities gives that $\mathfrak{S}(\rho')=\mathfrak{S}(H \cup \{h_0\})$. Hence the main term \eqref{mainterm3} is
\begin{equation}
 \sim \text{Li}(x) \frac{(\log R)^{k+2l}}{(k+2l)!} \binom{2l}{l} \mathfrak{S}(H \cup \{h_0\}).
 \end{equation}

Now we will prove that the error term \eqref{eroare2} is $o(x).$ Let 
$$E(x;q,a)=\left \lvert \pi^{*}(x;q,a)-\frac{\text{Li}(x)}{\phi_1(q)} \right \rvert.$$ 
Following a similar argument as in evaluating the error of the term \eqref{estimate 2}, we get that \eqref{eroare2} is
\begin{equation*}
\ll (\log R)^{2(k+l)} \displaystyle \sum_{d_1,d_2 \le R} \displaystyle \sum_{\substack{m= n \bmod \delta\\ \delta|P(m,H)}} E(x;\delta,m+h_0)
\end{equation*}
\begin{equation*}
 \ll (\log R)^{2(k+l)} \sum_{\delta \le R^2} (3k)^{\omega(\delta)} \max E(x; \delta, m+h_0).
 \end{equation*}
 The Cauchy-Schwarz inequality shows that the above is 
\begin{equation} 
\ll (\log R)^{2(k+l)} \left( \sum_{\delta \le R^2} \frac{(9k^2)^{\omega(\delta)}}{\delta}\right)^{1/2} {\left ( \sum_{\delta \le R^2} \delta  \max E(X; \delta, m+h_0)^2 \right)}^{1/2}.
\label{fog}
\end{equation}
Now, using \eqref{schwarz} we get that
\begin{equation*}
 \left( \sum_{\delta \le R^2} \frac{(9k^2)^{\omega(\delta)}}{\delta}\right )^{1/2} \ll \mathcal{L},
 \end{equation*} where we recall that $\mathcal L= O((\log N)^{O(1)}). $
 Using the asymptotics  
 \begin{equation*}
 \pi(x;q,a) \ll \frac{x}{q},
 \end{equation*} for $q \ll x,$ and \begin{equation*}
 \frac{\text{Li }(x)}{\phi_1(q)} \ll \frac{x}{q}
 \end{equation*} (which both follow by similar arguments as the ones used in evaluating the error of the term \eqref{estimate 2}), we get that 
 \begin{equation}
 E(x; \delta) \ll \frac{x}{\delta},
 \label{as4}
 \end{equation} where $E(x,q)=\displaystyle \max_{a}\displaystyle \max_{z \le x} E(z;q,a).$ 
 
 Now we define $\theta$ to be the supremum of all $\theta'$ for which $\displaystyle \sum_{q\le x^{\theta'}} E(x,q) \ll x/ (\log x)^A,$ for fixed $A$. Notice that by Theorem 6, it follows that $\theta \ge 1/2.$
 Hence, using the above asymptotics \eqref{as4}, we get that \eqref{fog} is equivalent to that the error term is
 \begin{equation}
  \ll \mathcal L \left(x \mathcal L \frac{x}{(\log x)^A}\right)^{1/2} \ll \frac{x}{(\log x)^{A'}},
  \end{equation} for $R^2 \ll x^{\theta-\epsilon}$. Thus, the error term is $o(x)$. 
  
 Since $\displaystyle \text{Li }(x) \sim \frac{x}{\log x}$, for $h_0 \notin H$ such that $h_0 \equiv 0 \pmod D$, for any $\epsilon>0$, we get the estimate
  \begin{equation}
\displaystyle \sum_{x \le n < 2x} \alpha(n) f(n+h_0)= \text{Li}(x) \frac{(\log R)^{k+2l}}{(k+2l)!} \binom{2l}{l} (\mathfrak{S}(H\cup \{h_0\})+o(\frac{1}{\log x})),
\label{est2}
\end{equation} provided $R \le x^{\theta/2-\epsilon}.$
  
  We use the same method to evaluate $\displaystyle \sum_{ x< n \le 2x} \alpha(n) f(n+h_0)$, for $h_0 \in H$ (by replacing the pair $(k,l)$ by $(k-1,l+1)$) and we get that for any $\epsilon>0$, we have the following asymptotics
 \begin{equation}
\displaystyle \sum_{x \le n<2x} \alpha(n) f(n+h_0)=  \text{Li }(x) \frac{(\log R)^{k+2l+1}}{(k+2l+1)!} \binom{2l+2}{l+1} (\mathfrak{S}(H)+o(\frac{1}{\log x})),
\label{est1}
\end{equation} provided $R \le x^{\theta/2-\epsilon}.$  

To get the desired result, we evaluate the ratio 
\begin{equation*}
\sum_{x \le n <2x} \alpha(n) \sum_{h \in H} f(n+h) / \sum_{x \le n <2x} \alpha(n).
\end{equation*}
Using the asymptotics \eqref{est1} and \eqref{orig}, it follows that for any $\delta > 0$, the inequality 
\begin{equation*}
\sum_{x \le n <2x} \alpha(n) \sum_{h \in H} f(n+h) / \sum_{x \le n <2x} \alpha(n) \ge \beta \frac{\text{Li }(x)}{x} \log R - \delta, 
\end{equation*} holds for sufficiently large $R$, where
\begin{equation*}
 \beta= \frac{2k(2l+1)}{(l+1)(k+2l+1)}.
 \end{equation*}
 Since $\text{Li }(x) \sim \displaystyle \frac{x}{\log x}$, we have  
 \begin{equation*}
\sum_{x \le n <2x} \alpha(n) \sum_{h \in H} f(n+h) / \sum_{x \le n <2x} \alpha(n) \ge \beta \frac{\log R}{\log x} - \delta.
\end{equation*} 
Notice that if we let $k, l \to \infty$ with $l=o(k)$, then $\beta \to 4$. If we assume an Elliott-Halberstam type conjecture for Fogels' generalization, then we can take $\theta>\frac{1}{2}$, and hence we get that
\begin{equation*}
\sum_{x \le n <2x} \alpha(n) \sum_{h \in H} f(n+h) / \sum_{x \le n <2x} \alpha(n) \ge 1-\delta.
\end{equation*} 
Letting $\delta \to 0$ proves that
\begin{equation*}
\sum_{x \le n <2x} \alpha(n) \sum_{h \in H} f(n+h) / \sum_{x \le n <2x} \alpha(n) \ge 1,
\end{equation*} for $R,x$ large enough. Letting $x \to \infty$ shows that for any $x$, there is some $n \in [x,2x)$ such that $n+h_i$ and $n+h_j$ split completely in the number field $K$, for some $i, j \in \{1, \ldots, k \}$. Hence, for any $x$, there are two primes that split completely between $x$ and $2x$, whose difference is bounded by $h_k-h_1$. This proves the existence of infinitely many primes that split completely, whose differences are bounded. 

The proof of the unconditional result \eqref{ec15} follows the same outline as the proof in section 2.4.
\end{proof}

\textbf{Acknowledgements}
\newline

The research for this paper has been done during the author's SURF (Summer Undergraduate Research Fellowship) at Caltech. The author would like to thank Dinakar Ramakrishnan and Paul Nelson for their guidance of the project and the Caltech SURF office for the funding.
\vspace{5mm}

\noindent Department of Mathematics, California Institute of Technology, Pasadena, California 91125
\newline
\textit{Email address:} \verb+amusat@caltech.edu+
\pagebreak

\bibliographystyle{plain}
\bibliography{ref3}

\end{document}